\numberwithin{equation}{section}
\numberwithin{figure}{section}
\DeclareSymbolFont{bbold}{U}{bbold}{m}{n}
\DeclareSymbolFontAlphabet{\mathbbold}{bbold}
\newcommand{\argmin}{\mathrm{argmin}}
\newcommand{\cA}{\mathcal{A}}
\newcommand{\cB}{\mathcal{B}}
\newcommand{\cC}{\mathcal{C}}
\newcommand{\bE}{\mathbb{E}}
\newcommand{\bP}{\mathbb{P}}
\newcommand{\bR}{\mathbb{R}}
\newtheorem{theorem}{Theorem}[section]
\newtheorem{corollary}[theorem]{Corollary}
\newtheorem{proposition}[theorem]{Proposition}
\newtheorem{remark}[theorem]{Remark}
\begin{document}

\title[Fortune favors the bold]{Leading the field: Fortune favors the bold \\in Thurstonian choice models}

\title[Fortune favors the bold]{Leading the field: Fortune favors the bold \\in Thurstonian choice models}

\author[S.N. Evans]{Steven N. Evans}
\address{Department of Statistics \#3860\\
 367 Evans Hall \\
 University of California \\
  Berkeley, CA  94720-3860 \\
   USA} 
\email{evans@stat.berkeley.edu}

\author[R.L. Rivest]{Ronald L. Rivest}
\address{Computer Science and Artificial Intelligence Lab \\
Massachusetts Institute of Technology \\
	Cambridge, MA 02139 \\
	USA} 
\email{rivest@mit.edu}

\author[P.B. Stark]{Philip B. Stark}
\address{Department of Statistics \#3860\\
 367 Evans Hall \\
 University of California \\
  Berkeley, CA  94720-3860 \\
   USA} 
\email{evans@stat.berkeley.edu}

\thanks{SNE supported in part by NSF grants DMS-09-07630 and DMS-15-12933 and NIH grant 1R01GM109454, 
RLR supported in part by NSF Science \& Technology Center grant CCF-0939370
We thank Alex Rivest for suggesting the procedure given in Appendix C for correcting small-school bias.}

\subjclass[2010]{62H99; 91B12; 91B14; 91E99}

\keywords{coupling, discrete choice models, extreme value, 
maximum (or minimum) of random variables, most dangerous equation, order statistic, 
preference scores, small schools phenomenon, stochastic domination, 
test of association, Thurstone, winning probability}

\date{\today}

\maketitle

\begin{abstract}
    Schools with the highest average student performance are often the smallest schools;
    localities with the highest rates of some cancers are frequently small;
    and the effects observed in clinical trials are likely to be largest for the 
    smallest numbers of subjects.
    Informal explanations of this ``small-schools phenomenon'' point to the fact that the 
    sample means of smaller samples have higher variances.
    But this cannot be a complete explanation:
    If we draw two samples from a diffuse distribution that is symmetric about some point, 
    then the chance that the smaller sample has larger mean is 50\%.
    A particular consequence of results proved below is that if one draws three or more  
    samples of different sizes from the same normal distribution,
    then the sample mean of the smallest sample is most likely to be highest, 
    the sample mean of the second smallest sample is second most likely to be highest, 
    and so on; this is true even though for any pair
    of samples, each one of the pair is equally likely to have the larger sample mean.
    The same effect explains why heteroscedasticity  
    can result in misleadingly small nominal $p$-values 
    in nonparametric tests of association.       
  
    Our conclusions are relevant to certain stochastic choice models,
    including the following generalization of Thurstone's Law of Comparative Judgment.  
    There are $n$ items.
    Item $i$ is preferred to  item $j$ if $Z_i < Z_j$, 
    where $Z$ is a random $n$-vector of preference scores.
    Suppose $\bP\{Z_i = Z_j\} = 0$ for $i \ne j$, so there are no ties. 
    Item $k$ is the favorite if $Z_k < \min_{i\ne k} Z_i$.  
    Let $p_i$ denote the chance that item $i$ is the favorite.  
    We characterize a large class of distributions for $Z$  for which $p_1 > p_2 > \cdots > p_n$.    
    Our results are most surprising when  $\bP\{Z_i < Z_j\} = \bP\{Z_i > Z_j\} = \frac{1}{2}$ for  $i \ne j$, 
    so neither of any two items is likely to be preferred over the other in a pairwise 
    comparison.
    Then, under suitable assumptions, $p_1 > p_2 > \cdots > p_n$ when the 
    variability of $Z_i$ decreases with $i$ in an appropriate sense.  
    Our conclusions echo the proverb  ``Fortune favors the bold.''
    
\end{abstract}

\section{Introduction}

When an achievement test is administered to all students of a particular age in a U.S. state,
it is typically observed that there is a disproportionate number of small schools among those
with the highest average scores \cite{wainer07}.  This ``small-schools phenomenon'' is to
be expected even if the scores of individual students at small schools come from the
same population as those at other schools: the standard deviation of the
average score at a school with $n$ students is proportional to $\frac{1}{\sqrt{n}}$;
averages at small schools will thus be more variable than those at larger schools; and hence
small schools are likely to be disproportionately represented among the highest performing
(and lowest performing) schools.  \cite{wainer07} lists several more examples
of this effect such as small communities having what seem to be unusually high rates of
kidney cancer and small cities appearing to be safer than larger ones.

The results we establish here bear on how the probability that a school has the
highest average depends on its size under the assumption that student performances
are drawn from a common population.  
Suppose that the $n \ge 3$ schools are listed in
order of increasing size and $Z_1, Z_2, \ldots, Z_n$ are the average test scores.
Assume that the $Z_k$ are independent and symmetrically distributed around a
common mean $\mu$ and that $|Z_i - \mu|$ stochastically dominates $|Z_j - \mu|$
when $i < j$ (for example, this will hold approximately when each $Z_k$
is approximately Gaussian because then $Z_k$ is approximately of the form
$\frac{\sigma}{\sqrt{N_k}} Y_k$, where $\sigma$ is the standard deviation for
an individual student's score, $N_k$ is the number of students at the $k^{\mathrm{th}}$
school, and $Y_1, Y_2, \ldots, Y_n$ are independent standard normal random variables).
It follows from the results we establish that
$
\bP\{\text{$Z_k$ is the largest of $Z_1, \ldots, Z_n$}\}
$
is decreasing in $k$---the smaller a school is the more likely it is
to have the highest average test score---even though 
no school has an advantage over any other in a ``head-to-head'' competition
($\bP\{Z_i > Z_j\} = \bP\{Z_j > Z_i\}$ for any pair $i \ne j$).

We can describe this conclusion a little more picturesquely.
Consider a group of $n \ge 3$ independent individuals with equal skill, in the sense that 
each individual's performance is symmetrically distributed about some common mean,
so that in a head-to-head contest between any two there is an equal 
chance that either will win.
For each individual $k$, let $\Pr_k(x)$ be 
the chance that the absolute value of
the difference between his or her performance and the 
shared expected performance exceeds $x$.
Suppose the individuals are well ordered by these probabilities: for all $x > 0$,
$\Pr_1(x) > \Pr_2(x) > \cdots > \Pr_n(x)$.
Under these assumptions, individual~1 has the highest probability of having
the best performance, individual~2 has the second-highest, and so on.
If a greater chance of extreme performance results from deliberate risk-taking, then
individual~1 is the ``boldest'' and the most likely to perform best.
In this sense, fortune favors the bold.
(Of course, symmetry dictates that individual~1 is also most likely to perform worst!)

To make our results mathematically precise and to connect them to the
literature on stochastic models for ranking and ordering, we require the
following notation and terminology.
Label the $n$ items with the set of integers $[n] := \{1,2,\ldots,n\}$.
An individual's preferences can be represented in two related ways:
either we report the {\em order vector} $(w(1), w(2), \ldots, w(n))$,
where $w(1) \in [n]$ is the label of the most favored item, $w(2)$
is the label of the second-most favored item, etc., or we report
the corresponding {\em rank vector} $(y(1), y(2), \ldots, y(n))$,
where $w(y(i)) = i$ for $i \in [n]$.

The order vector and rank vector are permutations
of the set $[n]$.
There is a huge literature on models of
random permutations, much of which attempts to capture
features of how individuals actually go about assigning orders or rankings
using whatever information they have at their disposal.
The standard reference
is \cite{MR1346107}, with \cite{MR964069,MR1237197} as useful adjuncts.

The earliest model for assigning orders is due to 
\cite{Thu27,Thu31a}.  
In Thurstone's model, the item labeled $i$ is associated
with a (real-valued) random variable $Z_i$, where the random vector
$Z = (Z_1,  \ldots, Z_n)$ is such that $\bP\{Z_i = Z_j\} = 0$
for $i \ne j$, and the resulting order vector is $(i_1, i_2, \ldots, i_n)$
if $Z_{i_1} < Z_{i_2} < \ldots < Z_{i_n}$.
One may interpret $-Z_i$
as the desirability of item $i$ measured on a one-dimensional
scale, so that items are ordered in decreasing order of their
desirability.

In many applications, it is more natural to consider $Z_i$ rather
than~$-Z_i$ to be the desirability.  
For example, one might model an election 
by letting~$Z_i$ be the number of voters who
will vote for candidate~$i$ (see, for example,
\cite{Laslier06}).  
The candidate who garners the \emph{most}
votes wins the election.  
As another example, consider 
\emph{Thompson Sampling} for the multi-armed
bandit problem in machine learning.  The random variable~$Z_i$ models the benefit
resulting from pulling arm~$i$.  The $Z_i$ are sampled and the arm
with \emph{maximum}~$Z_i$ is pulled
\cite{Thompson33,AgrawalGoyal11}.  Nonetheless, we shall continue
here to follow the tradition of Thurstone, and let $-Z_i$ model the
desirability of item~$i$.

Let $Z_*$ denote $\min\{Z_i : 1 \le i \le n\}$, the smallest $Z_i$ value, and
let $I_*$ denote $\argmin \{Z_i : 1 \le i \le n\}$,
the index of the minimum $Z_i$ value.
Let $p_i$ denote the probability that the rank of item $i$ is $1$ (i.e., that
$i=I_*$).

Given a specification of $Z$,
there are three closely-related problems to consider:
\begin{enumerate}
\item Finding the distribution of $Z_*$.
See \cite{Gumbel58,MR1892574,MR2234156,MR2364939} for a sample of the
extensive body of work in this area.
\item Determining which $i$ is most likely to be $I_*$.
\item Given $i$, deriving an expression for $p_i$.
\end{enumerate}
We do not consider problem~1 here; our focus is on problem~2,
particularly when, as is usually the case,
solving~2 by solving~3 seems intractable.
Most generally, we are concerned with
finding conditions that imply that $p_1 > p_2 > \cdots > p_n$.

We briefly survey various distributional assumptions
on $Z = (Z_1, \ldots, Z_n)$ that have been considered
in this context.

Thurstone proposed taking $Z = (Z_1, \ldots, Z_n)$
to have a non-degenerate Gaussian
distribution.  Despite its conceptual simplicity, this model is rather
daunting computationally.
Here $p_i$ is the probability that $Z$ falls in the region
$R := \{z \in \bR^n : z_i < z_j, \, j \ne i\}$,
an intersection of half-spaces
$\bigcap_{j \ne i} \{z \in \bR^n : z_i - z_j < 0\}$.
We can write $Z = \mu + X \Sigma^{\frac{1}{2}}$, where $\mu$ is
the mean vector of $Z$, $\Sigma$ is the variance-covariance matrix of $Z$,
$\Sigma^{\frac{1}{2}}$ is the positive definite symmetric
square root of $\Sigma$, and $X$ is a vector with independent standard normal
entries.
We are therefore interested in the probability that
$X$ falls in the polyhedral region $(R - \mu) \Sigma^{-\frac{1}{2}}$.
It is usually not possible to express such probabilities in a simple closed
form, but  there is a large literature on approximating them numerically
using various ingenious recursive schemes---see, for example,
\cite{MR1944268,MR1959823,MR2412640}.

Appendix~A shows that when $\{Z_i\}_{i=1}^n$
are independent Gaussian random variables,
computing the probabilities
$\{p_i\}_{i=1}^n$ explicitly is somewhat complex even when $n = 3$.
Appendix~B shows that this is also true when
$\{Z_i\}_{i=1}^n$  is a vector of independent random variables
with bilateral exponential distributions.
However, if all one cares about
is the the ordering of the $p_i$'s, then
the results of the present paper may apply to cases where
explicitly computing $\{p_i\}$ is intractable.

\cite{MR0040629} suggested taking the random vector $Z$ in
Thurstone's general model to be of the form
$(\theta_1 + X_1, \ldots, \theta_n + X_n)$,
where $\theta_1, \ldots, \theta_n$ are real-valued parameters and
$X_1, \ldots, X_n$
are independent and identically distributed (IID) random variables.
Equivalently (by exponentiating), one can take $Z$ to be of the form
$(\gamma_1 Y_1, \ldots, \gamma_n Y_n)$, where $\gamma_1, \ldots, \gamma_n$
are positive parameters and $Y_1, \ldots, Y_n$ are IID
positive random variables.  It is a consequence of our results here
that if $\theta_i < \theta_j$ (or $\gamma_i < \gamma_j$), then
$i$ is at least as likely as $j$ to have rank $1$, and this inequality is strict
under mild conditions.  
\cite{MR0093876}
provides a number of other results about the dependence on the
parameters of various other probabilities related to the order
and rank vectors.

A particularly tractable example of the multiplicative version of
Daniels' type of Thurstonian model is when
$(Z_1, \ldots, Z_n) = (\gamma_1 Y_1, \ldots, \gamma_n Y_n)$
with $Y_1, \ldots, Y_n$ IID
exponential random variables.  In this case the probability of
a given order vector $(i_1, \ldots, i_n)$ can be computed explicitly: it is
\[
\frac{\lambda_{i_1}}{\sum_j \lambda_j}
\frac{\lambda_{i_2}}{\sum_{j \ne i_1} \lambda_j}
\frac{\lambda_{i_3}}{\sum_{j \ne i_1, i_2} \lambda_j}
\cdots,
\]
where $\lambda_i := \gamma_i^{-1}$ for $1 \le i \le n$.
This model is due to \cite{MR0391338} and \cite{MR0108411},
and was studied in \cite{MR2630352,Sil84} as the {\em vase} model:
if we imagine a vase containing $n$ types of balls with balls of
type $i$ being in proportion $\lambda_i/(\sum_j \lambda_j)$ and we
remove balls one-by-one uniformly without replacement, then the
order in which the $n$ types first appear is given by this model.
The Plackett and Luce model is the only Thurstonian model of the Daniels type
that satisfies the axioms laid out in \cite{MR0108411}
for a rational choice procedure---see \cite{Yel77} for a discussion.

The Plackett and Luce model is also the
stationary distribution of a discrete-time Markov chain that is sometimes
called the {\em Tsetlin library process} or the {\em move-to-the-front
self-organizing list}.
Here the items are pictured as
books and an order vector $(i_1, \ldots, i_n)$ corresponds to a stack
with the book labeled $i_n$ on the bottom and the book labeled $i_1$
on top.
In each step of the chain, book $i$ is chosen with probability
proportional to $\lambda_i$, removed from its current position in the stack,
and placed on top of the stack.
See, e.g., \cite{Riv76a}
for early work on this process, and \cite{MR1371073}
for a detailed analysis of this Markov chain and an extensive review
of the related literature.

Thurstonian models based on random vectors with much more complex structure
are discussed in \cite{MR1237206,MR2312235}.

Subsection~\ref{SS:randomized_experiments} presents
a third, more involved, example that illustrates
a model of a more complex type
that is not built from IID random variables, 
but where the assumptions of our main result, Theorem~\ref{T:main}, 
giving the ordering of $\{p_i\}$, still applies.  
This example is cast in terms of the
times taken by three workers to complete three randomly assigned
tasks.  
The expected time for a worker to complete a task
is the same for every (worker, task) pair, but the performance of the
first worker is more variable than that of the second, which is in turn more variable than that of the third.
Again, computing $\{p_i\}$
is tedious and complex, but Theorem~\ref{T:main} easily allows
one to find their ordering without explicit computation
and to conclude that the first worker has the highest probability of finishing
first and the second worker has the second highest probability
of finishing first.

\emph{This paper investigates how to determine,
in Thurstonian models, the ordering
of the probabilities that each of the given
items will be the most preferred,
without having to explicitly compute these probabilities.}

In other words, we study the distribution of the first entry in the order
vector or, equivalently, the distribution of the label of the item
with rank one,  and we seek conditions on the
distribution of the random vector $(Z_1, \ldots, Z_n)$ such that
if $p_i$ is the probability that the item labeled $i$ has rank one,
then $p_1 > p_2 > \cdots > p_n$ or at least $p_1 \ge p_2 \ge \cdots \ge p_n$.  
As we have already remarked, we show that the
chain of weak inequalities holds in the Daniels model if
$\theta_1 < \theta_2 < \ldots < \theta_n$ in the additive case
and $\gamma_1 < \gamma_2 < \ldots < \gamma_n$ in the multiplicative case.

The strict inequalities also hold under suitable assumptions.
To see that extra assumptions are necessary, suppose we are in the additive case with $n=3$ and
the common distribution of $X_1, X_2, X_3$ is uniform on the interval $[0,1]$,
with
$\theta_1 = 0 < \theta_2 = 1 < \theta_3 = 2$.
Then $p_1 = 1 > p_2 = 0 = p_3$,
so only weak and not strict inequalities hold in general.
The conclusion $p_1 > p_2 > \cdots > p_n$ can be verified by direct
computation for the Plackett and Luce model, where
$p_i = \lambda_i / (\lambda_1 + \cdots + \lambda_n)$ with
$\lambda_i = \gamma_i^{-1}$.

The plan of the remainder of the paper is as follows.
In Section~\ref{S:Gaussian_example} we consider a Thurstonian
model with $(Z_1, \ldots, Z_n) = (\sigma_1 X_1, \ldots, \sigma_n
X_n)$, where the $\sigma_i$ are positive constants and $(X_1, \ldots,
X_n)$ is a random vector with IID standard Gaussian entries.
Of course, if $n=2$, then $p_1 = p_2 = \frac{1}{2}$ by the symmetry
of the Gaussian distribution, but we show
in Section~\ref{S:Gaussian_example} that
if $n \ge 3$ and $\sigma_1 >\sigma_2 >
\ldots > \sigma_n$, then $p_1 > p_2 > \cdots > p_n$.
In Appendix~A we compute $\{p_1, p_2, p_3\}$ for $n=3$
to emphasize the difficulty of establishing
by direct computation that such an ordering holds for general $n$.

One way to think about this result is that a choice is being made
among $n$ individuals based on their responses to a set of stimuli.
The IID random variables $\{|X_1|, \ldots, |X_n| \}$ represent the random
stimuli given to the individuals.  The response of individual $k$
to the stimulus $|X_k|$ is $S_k \eta_k(|X_k|)$, where
$\eta_k(y) = \sigma_k y$ and $S_k$ is the sign of $X_k$, a $\{-1,+1\}$-valued
random variable that is independent of $|X_k|$ and equally likely to be $-1$
or $+1$.  
For each $k$, the function $\eta_k$ happens to be increasing---but
as we shall see, that is irrelevant for a conclusion like that above.
What is important is that $\eta_i(y) > \eta_j(y)$ for all $y$
and $1 \le i < j \le n$,
so that if individuals $i$ and $j$ receive the same stimulus, the response of 
individual $i$ will be more extreme than
that of individual $j$.
The expected responses $\bE[Z_k]$, $1 \le k \le n$, are all zero
and $\bP\{Z_i > Z_j\} = \bP\{Z_i < Z_j\} = \frac{1}{2}$,
$1 \le i \ne j \le n$, so that individual $i$ has no advantage over
individual $j$ in a head-to-head contest, and yet $p_1 > p_2 > \cdots > p_n$.

These observations suggest that a similar result might hold if
\[
    (Z_1, \ldots, Z_n) = (S_1 \eta_1(Y_1), \ldots, S_n \eta_n(Y_n))\ ,
\]
where $(S_1, \ldots, S_n)$
is a suitable exchangeable $\{-1,+1\}^n$-valued random vector (recall that
a random vector is exchangeable if its joint distribution
is unchanged by any permutation of the coordinates),
$(Y_1, \ldots, Y_n)$ is an exchangeable $E^n$-valued
random vector for some measurable space $E$, and the functions
$\eta_k: E \to
\bR_+$ have the property that $\eta_i(y) > \eta_j(y)$ for all
$y \in E$ and
$1 \le i < j \le n$ (so that the response $Z_i$ is ``bolder'' than
the response $Z_j$).
We show in Section~\ref{S:main} that this conclusion is indeed
valid under appropriate assumptions (e.g., the
ordering of the $p_k$ would not hold if $S_k = +1$ with probability one
for all $k$; to rule this sort of situation out, we require
\[
\bP\{\#\{k \in [n] : S_k = -1\} = 2\}
\ge
\binom{n}{2} \bP\{\#\{k \in [n] : S_k = -1\} = 0\},
\]
which holds, for example, when $\{S_k\}$ are IID with individual
probability at least $\frac{1}{2}$ of taking the value $-1$).

In Section~\ref{S:independent} we look at the special case
in which $\{Y_1, \ldots, Y_n \}$ and $\{S_1, \ldots, S_n\}$ are both
IID.

We give two applications of our results in Section~\ref{S:applications}.
In Subsection~\ref{SS:randomized_experiments} we consider a model for randomized experiments
where  $n$ treatments are assigned uniformly at random to $n$ individuals.  The distribution of the
response of individual $j$ to treatment $i$ is symmetrically distributed about zero.
For a fixed individual $j$ the distribution of the
magnitude of the effect of treatment $i$ is stochastically nonincreasing in $i$:
Lower numbered treatments are more likely to have larger magnitude effects than
higher numbered ones.  
We will show that treatment $1$ is most likely to have
the greatest effect, treatment $2$ is second most likely to have
the greatest effect, and so on, even though no treatment
causes any systematic benefit or harm to any individual.  

In Subsection~\ref{SS:nonparametric_association} we use our results to show
that heteroscedasticity can distort the $p$-value of a permutation-based
test for association between two series to make it appear that there is
positive or negative association between the two series when there is no
such systematic relationship.

Appendix C sketches an approach for removing the ``small-school bias'' in a
way that is both \emph{fair} (equally likely to choose as best any school, when
the schools have the same effect on student scores) and \emph{valid} (most likely
to choose as best the school that increases student scores the most).

\section{Motivating Gaussian example}
\label{S:Gaussian_example}

Our interest in the general topic of this paper was piqued by the
following observation about a Gaussian version of
the Thurstone model we mentioned in the Introduction.

\begin{proposition}
\label{P:Gaussian}
Suppose $n \ge 3$ and
$(Z_1, \ldots, Z_n) =(\sigma_1 X_1, \ldots, \sigma_n X_n)$,
where $\sigma_i > 0$
for $1 \le i \le n$ and the entries of
the random vector $(X_1, \ldots, X_n)$ are independent standard Gaussian
random variables.
If $\sigma_1 > \sigma_2 > \cdots > \sigma_n$,
then $p_1 > p_2 > \cdots > p_n$.
\end{proposition}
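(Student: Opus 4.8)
The plan is to prove the stronger statement that $p_i > p_j$ whenever $1 \le i < j \le n$ (so that $\sigma_i > \sigma_j$), from which the full ordering follows at once. Fix such a pair and condition on the remaining coordinates. Writing $A := \sigma_i X_i$, $B := \sigma_j X_j$, and $M := \min\{\sigma_k X_k : k \ne i, j\}$, the three random variables $A$, $B$, $M$ are independent, $A$ and $B$ are centered Gaussian with standard deviations $\sigma_i > \sigma_j$, and the event that item $i$ is the favorite is $\{A < B\} \cap \{A < M\}$, while item $j$ is the favorite on $\{B < A\} \cap \{B < M\}$. Conditioning on $M$ gives
\[
  p_i - p_j = \bE\big[D(M)\big], \qquad D(m) := \bP\{A < B,\, A < m\} - \bP\{B < A,\, B < m\}.
\]
It therefore suffices to show that $D(m) > 0$ for every real $m$; since $n \ge 3$ guarantees that $M$ is a genuine, almost surely finite random variable (for $n = 2$ one has $M \equiv +\infty$ and $D(+\infty) = 0$, consistent with $p_1 = p_2$), this yields $\bE[D(M)] > 0$.

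To analyze $D$, write $f_A, f_B$ for the densities and $\bar F_A, \bar F_B$ for the upper tails of $A, B$, and note that
\[
  D(m) = \int_{-\infty}^m h(t)\,dt, \qquad h(t) := \bar F_B(t) f_A(t) - \bar F_A(t) f_B(t),
\]
with $D(-\infty) = 0$ and $D(+\infty) = \bP\{A<B\} - \bP\{B<A\} = 0$ by symmetry. The crux is to show that $h$ changes sign exactly once, from positive to negative, as $t$ increases; granting this, $D$ rises from $0$, attains a single maximum, and returns to $0$, so $D(m) > 0$ for every finite $m$, which is exactly what we need.

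Since $h = \bar F_A \bar F_B(\lambda_{\sigma_i} - \lambda_{\sigma_j})$ has the sign of $\lambda_{\sigma_i}(t) - \lambda_{\sigma_j}(t)$, where $\lambda_\sigma(t) := f_\sigma(t)/\bar F_\sigma(t) = \sigma^{-1} r(t/\sigma)$ is the Gaussian hazard rate and $r := \phi/\bar\Phi$ is the inverse Mills ratio, the single-crossing claim is a statement about the scaled hazard rates. I would establish it from the endpoint behavior together with the sign of the derivative at a crossing. The Riccati identity $r' = r^2 - u\,r$ gives $\lambda_\sigma'(t) = \lambda_\sigma(t)^2 - \sigma^{-2} t\,\lambda_\sigma(t)$, so at any point $t_0$ where $\lambda_{\sigma_i}(t_0) = \lambda_{\sigma_j}(t_0) =: \ell > 0$ one finds
\[
  \big(\lambda_{\sigma_i} - \lambda_{\sigma_j}\big)'(t_0) = \ell\, t_0\big(\sigma_j^{-2} - \sigma_i^{-2}\big),
\]
which has the sign of $t_0$ because $\sigma_i > \sigma_j$. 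Combining this with the limits $\lambda_\sigma(0) = 2\phi(0)/\sigma$, $\lambda_\sigma(t) \sim t/\sigma^2$ as $t \to +\infty$, and $\lambda_\sigma(t) \sim \sigma^{-1}\phi(t/\sigma)$ as $t \to -\infty$ --- which show $h > 0$ near $-\infty$ and $h < 0$ both at $0$ and near $+\infty$ --- the derivative sign rules out any zero on $[0,\infty)$ (there every crossing would be an up-crossing, incompatible with $h$ being negative at both ends) and forces a unique zero on $(-\infty,0)$ (there every crossing is a down-crossing). This single-crossing lemma is the main obstacle; once it is in place the remainder is bookkeeping. An alternative, more probabilistic route --- decomposing $A = S_A|A|$, $B = S_B|B|$ over the four sign patterns and exploiting the stochastic domination $|A| = \sigma_i|X_i| \succeq \sigma_j|X_j| = |B|$ --- mirrors the magnitude/sign structure used for the general Theorem~\ref{T:main}, but its cross terms seem no easier to control than the hazard-rate comparison above.
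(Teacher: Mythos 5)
Your proof is correct, but it takes a genuinely different route from the paper's. The paper's argument is a homotopy in the parameter: writing $p_i=\int_{-\infty}^{\infty}\prod_{j\ne i}\bigl(1-\Phi(x/\sigma_j)\bigr)\,d\Phi(x/\sigma_i)$, integrating by parts, differentiating with respect to $\sigma_i$, and folding the real line onto $[0,\infty)$ using $\phi(z)=\phi(-z)$ shows $\partial p_i/\partial\sigma_i>0$; the ordering then follows because $p_i=p_j$ when $\sigma_i=\sigma_j$. You instead compare $p_i$ and $p_j$ directly by conditioning on $M=\min_{k\ne i,j}\sigma_k X_k$ and establishing a single-crossing property of the scaled Gaussian hazard rates $\lambda_\sigma(t)=\sigma^{-1}r(t/\sigma)$ via the Riccati identity $r'=r^2-ur$. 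I checked the key steps---the sign of $(\lambda_{\sigma_i}-\lambda_{\sigma_j})'$ at a crossing, the three endpoint comparisons, the ``all crossings on $(0,\infty)$ are up-crossings / all crossings on $(-\infty,0)$ are down-crossings'' bookkeeping, and the resulting strict positivity of $D$ and hence of $\bE[D(M)]$---and they are sound. The trade-offs: the paper's folding argument uses only that the common density is symmetric, so (as the paper remarks) it extends verbatim to arbitrary symmetric densities, whereas your route is tied to the Gaussian through the Riccati identity; on the other hand, your proof delivers the pairwise inequality $p_i>p_j$ without varying a parameter, makes transparent exactly where $n\ge 3$ enters ($M$ is a.s.\ finite rather than $\equiv+\infty$, so strict positivity of $D$ on $\bR$ actually gets used), and its condition-on-the-rest structure is closer in spirit to the coupling in Theorem~\ref{T:main}. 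Two small points to make explicit in a full write-up: the comparison near $+\infty$ needs the remark that the corrections to $\lambda_\sigma(t)\sim t/\sigma^2$ are $O(1/t)$ (indeed the $1/t$ terms are $\sigma$-independent and cancel), so the linear gap $t(\sigma_i^{-2}-\sigma_j^{-2})$ dominates; and the uniqueness-of-crossing argument should note that the derivative condition forces every zero to be a strict sign change, which is what rules out a second zero on each half-line.
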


\begin{proof}
Let $\bigwedge \{\cdot \}$  denote the minimum of a set of real numbers and
$\bigvee \{\cdot\}$ denote the maximum.
Note that
\begin{eqnarray}
p_i & = & \bP\{\sigma_i X_i < \sigma_k X_k, \, k \ne i\}  \nonumber \\
& = &\bP\left\{\sigma_i X_i < \bigwedge_{k \ne i} \sigma_k X_k\right\} \nonumber \\
& = &\bP\left\{\bigvee_{k \ne i} (\sigma_i X_i - \sigma_k X_k) < 0\right\},
\end{eqnarray}
for $1 \le i \le n$.

Let $\phi$ and $\Phi$ denote the standard Gaussian probability density
function and cumulative distribution function, respectively.
Then (by conditioning on $X_i$ in the first integral, integrating by parts in the second,
and applying the chain rule in the third),
\begin{eqnarray}
p_i & = &
  \int_{-\infty}^\infty
      \prod_{j \ne i}
          \left(1 - \Phi\left(\frac{x}{\sigma_j}\right)\right)
          \frac{\partial}{\partial x} \Phi\left(\frac{x}{\sigma_i}\right) \, dx
\nonumber \\
& = &
- \int_{-\infty}^\infty
\Phi\left(\frac{x}{\sigma_i}\right)
\frac{\partial}{\partial x} \prod_{j \ne i} \left(1 - \Phi\left(\frac{x}{\sigma_j}\right)\right) \, dx \nonumber \\
& = &
- \int_{-\infty}^\infty
\Phi\left(\frac{x}{\sigma_i}\right)
\sum_{j \ne i} \phi\left(\frac{x}{\sigma_j}\right) \frac{1}{\sigma_j}
 \prod_{k \ne i,j} \left(1 - \Phi\left(\frac{x}{\sigma_k}\right)\right) \, dx, 
\end{eqnarray}
and so
\begin{eqnarray}
\frac{\partial p_i}{\partial \sigma_i}
& = &
-\sum_{j \ne i}
\int_{-\infty}^\infty
\phi\left(\frac{x}{\sigma_i}\right)\left(-\frac{x}{\sigma_i^2}\right)
 \phi\left(\frac{x}{\sigma_j}\right) \frac{1}{\sigma_j}
 \prod_{k \ne i,j} \left(1 - \Phi\left(\frac{x}{\sigma_k}\right)\right)
 \, dx \nonumber \\
& = &
\sum_{j \ne i}
\int_0^\infty
\phi\left(\frac{x}{\sigma_i}\right)\left(\frac{x}{\sigma_i^2}\right)
 \phi\left(\frac{x}{\sigma_j}\right) \frac{1}{\sigma_j} \nonumber \\
 & &\quad \times
 \left[
 \prod_{k \ne i,j} \left(1 - \Phi\left(\frac{x}{\sigma_k}\right)\right)
 -
 \prod_{k \ne i,j} \left(1 - \Phi\left(\frac{-x}{\sigma_k}\right)\right)
 \right]
 \, dx \nonumber \\
& >& 0,
\end{eqnarray}
where we used the facts that $\phi(z) = \phi(-z)$ for all $z \in \bR$
and that the function $\Phi$ is increasing.  It follows that
$p_i$ is an increasing function of $\sigma_i$, and,
because $p_i = p_j$ when $\sigma_i = \sigma_j$, it is clear that if
$\sigma_1 > \sigma_2 > \cdots > \sigma_n$, then
$p_1 > p_2 > \cdots > p_n$.
\end{proof}

\begin{remark}
We show in Appendix A that when $n=3$
\begin{eqnarray}
p_1
& = &
\frac{1}{2 \pi}
\arccos
\left(
-\frac{\sigma_1^2}{\sqrt{(\sigma_2^2 + \sigma_1^2) (\sigma_3^2 + \sigma_1^2)}}
\right) \nonumber \\
& > & p_2
= \frac{1}{2 \pi}
\arccos
\left(
-\frac{\sigma_2^2}{\sqrt{(\sigma_1^2 + \sigma_2^2) (\sigma_3^2 + \sigma_2^2)}}
\right) \nonumber \\
& > & p_3
= \frac{1}{2 \pi}
\arccos
\left(
-\frac{\sigma_3^2}{\sqrt{(\sigma_1^2 + \sigma_3^2) (\sigma_2^2 + \sigma_3^2)}}
\right), 
\end{eqnarray}
but finding such explicit expressions for the $p_i$ and establishing the
ordering claimed in Proposition~\ref{P:Gaussian} becomes increasingly
complex for larger values of $n$.  Moreover, Proposition~\ref{P:Gaussian}
holds, with essentially the same proof, if the common distribution of
$X_1, \ldots, X_n$ is an arbitrary symmetric distribution possessing a density,
whereas it is typically impossible to find explicit closed form expressions
for the $p_i$ in this case.  We observe in Appendix B that even for
a symmetric distribution as tractable as the bilateral exponential,
the formulae for the $p_i$ are already somewhat formidable for $n=3$
and establishing an ordering analogous to that
claimed in Proposition~\ref{P:Gaussian} requires a certain amount
of algebraic manipulation.
\end{remark}

\section{Main theorem}
\label{S:main}

This section presents our main theorem, giving the most general conditions
we have found so far that imply $p_1 \ge p_2 \ge \cdots \ge p_n$.


\begin{theorem}
\label{T:main}
Let $(Z_1,  \ldots, Z_n)$ be an $\bR^n$-valued
random vector given by $Z_k = S_k \eta_k(Y_k)$,
$1 \le k \le n$, where:
\begin{itemize}
\item
$(Y_1, \ldots, Y_n)$ is an exchangeable $E^n$-valued
random vector for some measurable space $(E,\mathcal{E})$;
\item
$\eta_1,  \ldots, \eta_n$ are measurable functions from
$E$ to $\bR_+$ with the property that
$\eta_i(y) \ge \eta_j(y)$ for all $y \in E$
and $1 \le i < j \le n$;
\item
$(S_1, \ldots, S_n)$ is an exchangeable
$\{-1,+1\}^n$-valued random vector;
\item
$(Y_1, \ldots, Y_n)$ and $(S_1, \ldots, S_n)$
are independent;
\item
$\bP\{S_1 = \cdots = S_{n-2} = +1; \; S_{n-1} = -1\}
\ge
\bP\{S_1 = \cdots = S_{n-1} = +1\}$.
\end{itemize}
Define
\[
p_k := \bP
\left \{
Z_k < \bigwedge_{\ell \ne k} Z_\ell
\right \}.
\]
Then, $p_1 \ge p_2 \ge \cdots \ge p_n$.
\end{theorem}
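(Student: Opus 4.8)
\emph{Strategy.} The plan is to condition on the magnitudes $(Y_1,\dots,Y_n)$ and integrate out the signs, turning the whole comparison into a one–dimensional statement about the \emph{rank} of a magnitude. Because each $\eta_k \ge 0$, the coordinate $Z_k = S_k\eta_k(Y_k)$ is negative exactly when $S_k=-1$, and every negative coordinate beats every nonnegative one. Hence, conditionally on $Y=y$ (and using strict inequalities, so that ties contribute to no $p_k$), the favorite is governed by a clean dichotomy: if at least one $S_\ell=-1$, then the event $\{Z_i<\bigwedge_{\ell\ne i}Z_\ell\}$ occurs iff $i$ is the negative-sign coordinate of \emph{largest} magnitude $\eta_i(y_i)$; and if every $S_\ell=+1$, it occurs iff $i$ is the coordinate of \emph{smallest} magnitude. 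First I would record this dichotomy precisely, since it is what lets the signs and magnitudes be handled separately.

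\emph{Reduction to a rank.} Next I would average over the signs using their exchangeability. Writing $q_j:=\bP\{\#\{k:S_k=-1\}=j\}$, exchangeability gives every particular negative-set $N$ the weight $q_{|N|}/\binom{n}{|N|}$. Let $\rho_i$ be the rank of $\eta_i(Y_i)$ among $\eta_1(Y_1),\dots,\eta_n(Y_n)$ counted from the smallest, so $\rho_i-1=\#\{\ell\ne i:\eta_\ell(Y_\ell)<\eta_i(Y_i)\}$. Coordinate $i$ is the largest member of a negative set $N\ni i$ of size $j$ exactly when the other $j-1$ members are drawn from the $\rho_i-1$ coordinates of smaller magnitude, which happens in $\binom{\rho_i-1}{j-1}$ ways. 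Combining the two cases of the dichotomy yields the identity
\[
\bP\!\left\{Z_i<\bigwedge_{\ell\ne i}Z_\ell \,\Big|\, Y\right\}
= \sum_{j=1}^n \frac{q_j}{\binom{n}{j}}\binom{\rho_i-1}{j-1} + q_0\,\ind\{\rho_i=1\}
=: f(\rho_i),
\]
so that $p_i=\bE[f(\rho_i)]$ and the entire dependence on $i$ is funnelled through the single random rank $\rho_i$.

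\emph{Monotonicity of $f$ is the sign hypothesis.} The heart of the argument is to show $f$ is nondecreasing on $\{1,\dots,n\}$. For $\rho\ge 2$ the indicator is absent and each $\binom{\rho-1}{j-1}$ is nondecreasing in $\rho$, so the only step that could fail is from $\rho=1$ to $\rho=2$. Direct evaluation gives $f(1)=q_0+q_1/n$ and $f(2)=q_1/n+q_2/\binom{n}{2}$, whence
\[
f(2)-f(1)=\frac{q_2}{\binom{n}{2}}-q_0 .
\]
This is nonnegative precisely because the final hypothesis, rewritten via exchangeability, is exactly $q_2\ge\binom{n}{2}q_0$. This step captures ``fortune favors the bold'': the bold coordinate is penalized only in the all-positive event (weight $q_0$), and the hypothesis ensures this is outweighed by its advantage in the two-negative events (weight $q_2$).

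\emph{Stochastic domination of ranks, and conclusion.} It remains to show that the bold coordinate's rank stochastically dominates the timid one's, i.e.\ $\rho_i\succeq_{\mathrm{st}}\rho_{i+1}$ for the adjacent pair with $\eta_i\ge\eta_{i+1}$. Pointwise domination fails, since $\rho_i$ and $\rho_{i+1}$ compare $\eta_i(Y_i)$ and $\eta_{i+1}(Y_{i+1})$ at different arguments, so I would argue by coupling: condition on $(Y_\ell)_{\ell\ne i,i+1}$ and on the unordered pair $\{Y_i,Y_{i+1}\}=\{u,v\}$, which by exchangeability of $Y$ makes the two orderings equally likely. Cross-pairing the orderings and invoking $\eta_i\ge\eta_{i+1}$ pointwise (which moves both the count of smaller ``other'' magnitudes and the single comparison indicator the right way) forces $\rho_i\ge\rho_{i+1}$ deterministically within the coupling, giving $\rho_i\succeq_{\mathrm{st}}\rho_{i+1}$. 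Since $f$ is nondecreasing, $p_i=\bE[f(\rho_i)]\ge\bE[f(\rho_{i+1})]=p_{i+1}$, and chaining over $i=1,\dots,n-1$ gives $p_1\ge p_2\ge\cdots\ge p_n$. I expect the main obstacle to be this last coupling: the naive pointwise comparison of ranks is false, so one must exploit exchangeability of $Y$ together with the pointwise ordering of the $\eta_k$ to obtain domination, while the genuinely decisive insight is the reduction to $f(\rho_i)$, which makes the sign hypothesis surface exactly as the inequality $f(2)\ge f(1)$.
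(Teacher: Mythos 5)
Your argument is, at bottom, the same as the paper's: both proofs reduce the event $\{Z_i < \bigwedge_{\ell\ne i} Z_\ell\}$, conditionally on the magnitudes, to a monotone function of the rank of $|Z_i|$ among $|Z_1|,\dots,|Z_n|$, and both obtain the required stochastic ordering of that rank by the identical swap coupling (exchange the roles of $Y_i$ and $Y_j$, use exchangeability of $Y$ to preserve the law, and use $\eta_i\ge\eta_j$ pointwise to get a deterministic comparison). Your $f(\rho)$ is exactly the paper's $q(n-\rho)$; your explicit binomial expansion of it in terms of the $q_j$, with the hypothesis surfacing precisely as $f(2)-f(1)=q_2/\binom{n}{2}-q_0\ge 0$, is a clean repackaging of the paper's direct verification that $q(m)\ge q(m+1)$ for $m\le n-3$ automatically and $q(n-2)\ge q(n-1)$ by assumption. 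So the structure, the decisive combinatorial identity, and the coupling all match.

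The one genuine gap is your treatment of ties. Nothing in the hypotheses forces the magnitudes $\eta_1(Y_1),\dots,\eta_n(Y_n)$ to be almost surely distinct (take $E$ finite, or all $\eta_k$ constant, which the theorem permits); on the event of a tie your rank $\rho_i$ is not defined, and the identity $\bP\{Z_i<\bigwedge_{\ell\ne i}Z_\ell\mid Y\}=f(\rho_i)$ fails --- for instance, if the two largest-magnitude members of the negative set are tied, neither is the strict minimum, which no assignment of ranks in your formula reproduces. The parenthetical ``ties contribute to no $p_k$'' names the phenomenon but does not repair the formula, and the deterministic inequality $\rho_i\ge\rho_{i+1}$ inside your coupling likewise presumes strict comparisons. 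The paper confronts this head-on: it replaces $Z_k$ by $Z_k^\epsilon=S_k(\eta_k(Y_k)+\epsilon T_k)$ with independent mean-one exponentials $T_k$, which preserves every hypothesis, makes the $|Z_k^\epsilon|$ almost surely distinct so that the rank argument applies verbatim, and then lets $\epsilon\downarrow 0$. You need some such tie-breaking device (or an added hypothesis that ties are null) before your first display is legitimate; with it inserted at the outset, the rest of your argument goes through.
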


\begin{proof}
Let $(T_1, \ldots, T_n)$ be a vector of independent
random variables that is independent of the pair of random vectors
$(Y_1, \ldots, Y_n)$ and $(S_1, \ldots, S_n)$
and such that each random variable $T_k$ has an exponential
distribution with mean $1$.  Set
$Z_k^\epsilon = S_k (\eta_k(Y_k) + \epsilon T_k)$ for $1 \le k \le n$
and $\epsilon > 0$.  It is clear that
$p_k$ is the limit as $\epsilon \downarrow 0$ of
\[
p_k^\epsilon
:=
\bP
\left \{
Z_k^\epsilon < \bigwedge_{\ell \ne k} Z_\ell^\epsilon
\right \}
\]
for $1 \le k \le n$, so it suffices to show that
$p_1^\epsilon \ge p_2^\epsilon \ge \cdots \ge p_n^\epsilon$.

Set
\[
q(m) :=
\begin{cases}
\bP\{S_1 = \cdots = S_m = +1; \; S_{m+1} = -1\},
& \quad 0 \le m < n-1, \\
\bP\{S_1 = \cdots = S_{n-1} = +1\},
&  \quad m = n-1.
\end{cases}
\]
By the assumptions of the theorem, for $0 \le m < n-1$,
$$q(m) = \bP\{S_{k_1} = \cdots = S_{k_m} = +1; \; S_{k_{m+1}} = -1\}$$
for any subset $\{k_1, \ldots, k_{m+1}\} \subseteq [n]$
of cardinality $m+1$, and
$$q(n-1) = \bP\{S_{k_1} = \cdots = S_{k_{n-1}} = +1\}$$
for any subset $\{k_1, \ldots, k_{n-1}\} \subseteq [n]$
of cardinality $n-1$.  
Thus, $q(0) \ge q(1) \ge \cdots \ge q(n-2)$
and, by assumption, $q(n-2) \ge q(n-1)$.

Suppose that $a_1, \ldots, a_n \in \bR_+$ are distinct.
If $a_k \ne \bigwedge_\ell a_\ell$, then
\[
\begin{split}
& \bP
\left \{
S_k a_k < \bigwedge_{\ell \ne k} S_\ell a_\ell
\right \} \\
& \quad =
\bP
\left(
\left\{S_k = -1\right\}
\cap
\left
\{S_\ell = +1, \,
\mbox{ $\forall \ell \ne k$ such that $a_\ell > a_k$}
\right \}
\right); \\
\end{split}
\]
whereas if
$a_k = \bigwedge_\ell a_\ell$, then
\[
\bP
\left \{
S_k a_k < \bigwedge_{\ell \ne k} S_\ell a_\ell
\right \}
=
\bP
\left \{
S_\ell = +1, \, \ell \ne k
\right \}.
\]
In either case,
\[
\bP
\left \{
S_k a_k < \bigvee_{\ell \ne k} S_\ell a_\ell
\right \}
= q\left(\#\left\{1 \le \ell \le n : a_\ell > a_k\right\}\right).
\]

The values of $|Z_1^\epsilon|, \ldots, |Z_n^\epsilon|$
are almost surely distinct.  For $1 \le k \le n$ set
\[
M_k := \#\left\{1 \le \ell \le n : |Z_\ell^\epsilon| > |Z_k^\epsilon|\right\}.
\]
We must show that
\[
\bE
\left[
q\left( M_i \right)
\right]
\ge
\bE
\left[
q\left(M_j \right)
\right]
\]
for $1 \le i < j \le n$;
or, equivalently after summing by parts, that
\[
\begin{split}
& q(0) \bP\{M_i \ge 0\} + \sum_{m=0}^{n-2} [q(m+1)-q(m)] \bP\{M_i \ge m+1\} \\
& \quad \ge
q(0) \bP\{M_j \ge 0\} + \sum_{m=0}^{n-2} [q(m+1)-q(m)] \bP\{M_j \ge m+1\}. \\
\end{split}
\]
Since $\bP\{M_i \ge 0\} = \bP\{M_j \ge 0\} = 1$ and
$q(0) \ge q(1) \ge \cdots \ge q(n-1)$,
it suffices to show that 
\[
\bP\{M_i \ge m\}
\le
\bP\{M_j \ge m\}
\]
for $1 \le m \le n-1$.

Fix $1 \le i < j \le n$.  Note that
\[
\bP\{M_i \ge m\}
=
\bP
\left\{
\exists k_1, \ldots, k_m \ne i : |Z_{k_h}^\epsilon| > |Z_i^\epsilon|,
\, 1 \le h \le m
\right\}
\]
and $\bP\{M_j \ge m\}$ is given by a similar expression.
Define functions $\tilde \eta_k$, $1 \le k \le n$, by
$\tilde \eta_i = \eta_j$, $\tilde \eta_j = \eta_i$, and $\tilde \eta_k = \eta_k$,
$k \notin \{i,j\}$.  Observe that
\[
\begin{split}
&  \left\{
\exists k_1, \ldots, k_m \ne i : |Z_{k_h}^\epsilon| > |Z_i^\epsilon|,
\, 1 \le h \le m
\right\} \\
& \quad =
\left\{
\exists k_1, \ldots, k_m \ne i : 
\eta_{k_h}(Y_{k_h})+\epsilon T_{k_h} > \eta_i(Y_i)+\epsilon T_i,
\, 1 \le h \le m
\right\} \\
& \quad \subseteq
\left\{
\exists k_1, \ldots, k_m \ne i : 
\tilde \eta_{k_h}(Y_{k_h})+\epsilon T_{k_h} > \tilde \eta_i(Y_i)+\epsilon T_i,
\, 1 \le h \le m
\right\} \\
\end{split}
\]
because $\tilde \eta_i(y) = \eta_j(y) \le \eta_i(y)$ and 
$\tilde \eta _k(y) \ge \eta_k(y)$ for $k \ne i$ (with equality unless $k=j$).
Define random variables $\tilde Y_k$, $1 \le k \le n$, by
$\tilde Y_i = Y_j$, $\tilde Y_j = Y_i$, and $\tilde Y_k = Y_k$,
$k \notin \{i,j\}$.  Define $\tilde T_k$, $1 \le k \le n$, similarly.
By exchangeability, $(Y_1, \ldots, Y_n)$
and $(\tilde Y_1, \ldots, \tilde Y_n)$ have the same distribution.  Of course,
$(T_1, \ldots, T_n)$ and $(\tilde T_1, \ldots, \tilde T_n)$ have the same
distribution.  Therefore,
\[
\begin{split}
& \bP\left\{
\exists k_1, \ldots, k_m \ne i : 
\tilde \eta_{k_h}(Y_{k_h})+\epsilon T_{k_h} > \tilde \eta_i(Y_i)+\epsilon T_i,
\, 1 \le h \le m
\right\} \\
& \quad =
\bP\left\{
\exists k_1, \ldots, k_m \ne i : 
\tilde \eta_{k_h}(\tilde Y_{k_h})+\epsilon \tilde T_{k_h} > \tilde \eta_i(\tilde Y_i)+\epsilon \tilde T_i,
\, 1 \le h \le m
\right\}. \\
\end{split}
\]
Now
\[
\begin{split}
& \left\{
\exists k_1, \ldots, k_m \ne i : 
\tilde \eta_{k_h}(\tilde Y_{k_h})+\epsilon \tilde T_{k_h} > \tilde \eta_i(\tilde Y_i)+\epsilon \tilde T_i,
\, 1 \le h \le m
\right\} \\
& \quad =
\left\{
\exists k_1, \ldots, k_m \ne j : 
\eta_{k_h}(Y_{k_h})+\epsilon  T_{k_h} >  \eta_j( Y_j)+\epsilon  T_j,
\, 1 \le h \le m
\right\} \\
& \quad =
\{M_j \ge m\}. \\
\end{split}
\]
Putting the above together gives
$\bP\{M_i \ge m\} \le \bP\{M_j \ge m\}$ as required.

\end{proof}

\begin{remark}
Assume the hypotheses of Theorem~\ref{T:main}.  Note that
\begin{eqnarray}
& &\bP\{S_1 = \cdots = S_{n-2} = +1; \; S_{n-1} = -1\} \nonumber \\
& &\quad =
\bP\{S_1 = \cdots = S_{n-2} = +1; \; S_{n-1} = -1; \; S_n = +1\} \nonumber \\
& &\qquad +
\bP\{S_1 = \cdots = S_{n-2} = +1; \; S_{n-1} = -1; \; S_n = -1\} \nonumber \\
\end{eqnarray}
and
\begin{eqnarray}
& & \bP\{S_1 = \cdots = S_{n-1} = +1\} \nonumber \\
& &\quad =
\bP\{S_1 = \cdots = S_{n-1} = +1; \; S_n = +1\} \nonumber \\
& &\qquad +
\bP\{S_1 = \cdots = S_{n-1} = +1; \; S_n = -1\} \nonumber \\
& &\quad =
\bP\{S_1 = \cdots = S_{n-1} = +1; \; S_n = +1\} \nonumber \\
& &\qquad +
\bP\{S_1 = \cdots = S_{n-2} = +1; \; S_{n-1} = -1; \; S_n = +1\},
\end{eqnarray}
by the exchangeability hypothesis, so the hypothesis that
$$\bP\{S_1 = \cdots = S_{n-2} = +1; \; S_{n-1} = -1\}
\ge
\bP\{S_1 = \cdots = S_{n-1} = +1\}$$
is equivalent to the hypothesis that
$$\bP\{S_1 = \cdots = S_{n-2} = +1; \; S_{n-1} = S_n = -1\}
\ge
\bP\{S_1 = \cdots = S_n = +1\}. $$
Again using exchangeability, the latter is equivalent to
\[
\frac{1}{\binom{n}{2}} \bP\{\#\{k \in [n] : S_k = -1\} = 2\}
\ge
\bP\{\#\{k \in [n] : S_k = -1\} = 0\}.
\]
\end{remark}

\begin{remark}
Suppose in addition to the hypothesis of Theorem~\ref{T:main} that
$\bP\{S_i = S_j = +1\} = \bP\{S_i = S_j = -1\}$
for $i \ne j$.  Then, by exchangeability,
\begin{eqnarray}
\bP\{Z_i < Z_j\}
& = &
\bP\{\eta_i(Y_i) < \eta_j(Y_j)\} \bP\{S_i = S_j = +1\} \nonumber \\
& &\quad +
\bP\{\eta_i(Y_i) > \eta_j(Y_j)\} \bP\{S_i = S_j = -1\} \nonumber \\
& &\quad +
\bP\{S_i = -1; \; S_j = +1\} \nonumber \\
& = &
\bP\{\eta_i(Y_i) < \eta_j(Y_j)\} \bP\{S_i = S_j = -1\} \nonumber \\
& &\quad +
\bP\{\eta_i(Y_i) > \eta_j(Y_j)\} \bP\{S_i = S_j = +1\} \nonumber \\
& &\quad +
\bP\{S_i = +1; \; S_j = -1\} \nonumber \\
& = &
\bP\{Z_i > Z_j\}.
\end{eqnarray}
Theorem~\ref{T:main} is especially interesting in this case,
because then $Z_i$ is not systematically smaller than $Z_j$ for $i < j$,
and yet $p_1 \ge p_2 \ge \cdots \ge p_n$.
\end{remark}

\begin{remark}
\label{R:strict_inequality}
Theorem~\ref{T:main} gives a sufficient condition for 
the weak inequalities $p_1 \ge p_2 \ge \cdots \ge p_n$
but not the strict inequalities $p_1 > p_2 > \cdots > p_n$.
Examining the proof indicates how the hypotheses can be strengthened
to yield the latter conclusion.  Suppose that $\bP\{Z_i = Z_j\} = 0$ for $1 \le i \ne j \le n$.
It is clear from the proof of the theorem that $p_i > p_j$ for a given pair $1 \le i < j \le n$ if
and only if there exists $0 \le m \le n-2$ such that $q(m+1) < q(m)$ and
\[
\begin{split}
& \bP\left\{
\exists k_1, \ldots, k_{m+1} \ne i: 
\eta_{k_h}(Y_{k_h}) >  \eta_i( Y_i),
\, 1 \le h \le m+1
\right\} \\
& \quad <
\bP\left\{
\exists k_1, \ldots, k_{m+1} \ne j : 
\eta_{k_h}(Y_{k_h}) >  \eta_j( Y_j),
\, 1 \le h \le m+1
\right\} \\
\end{split}
\]
for that $m$.  For example,
if $n \ge 3$ and $q(0) = \bP\{S_1 = -1\} > \bP\{S_1 = +1; S_2 = -1\} = q(1)$, then it
suffices that
$
\bP\{\exists k \ne i : \eta_k(Y_k) > \eta_i(Y_i)\}
<
\bP\{\exists k \ne j : \eta_k(Y_k) > \eta_j(Y_j)\}
$
or, equivalently by exchangeability,
\[
\begin{split}
\bP\left\{\bigvee_{k \notin \{i,j\}} \eta_k(Y_k)  \vee \eta_j(Y_j) > \eta_i(Y_i) \right\}
& <
\bP\left\{\bigvee_{k \notin \{i,j\}} \eta_k(Y_k)  \vee \eta_i(Y_i) > \eta_j(Y_j) \right\} \\
& =
\bP\left\{\bigvee_{k \notin \{i,j\}} \eta_k(Y_k)  \vee \eta_i(Y_j) > \eta_j(Y_i) \right\}. \\
\end{split}
\]
Because $\eta_j(Y_j) \le \eta_i(Y_j)$ and $\eta_i(Y_i) \ge \eta_j(Y_i)$ it further suffices to have
\begin{equation}
\label{strict_inequality_condn_general}
0 < \bP\left\{\bigvee_{k \notin \{i,j\}} \eta_k(Y_k)  \vee \eta_i(Y_j) > \eta_j(Y_i), \; \bigvee_{k \notin \{i,j\}} \eta_k(Y_k)  \vee \eta_j(Y_j) \le \eta_i(Y_i) \right\}.
\end{equation}
\end{remark}

\section{Independent random variables}
\label{S:independent}

Theorem~\ref{T:main} has the following consequence
when the entries of $(Z_1, \ldots, Z_n)$ are independent.

\begin{corollary}
\label{C:independent}
Suppose that $n \ge 3$.
Let $(Z_1, \ldots, Z_n)$ be an $\bR^n$-valued
random vector given by $Z_k = S_k W_k$,
$1 \le k \le n$, where:
\begin{itemize}
\item
$W_1, \ldots, W_n$ are independent $\bR_+^n$-valued
random variables;
\item
$W_i$ stochastically dominates $W_j$ for $1 \le i < j \le n$
(that is, $\bP\{W_i > w\} \ge \bP\{W_j > w\}$
for all $w \in \bR_+$);
\item
$S_1, \ldots, S_n$ are IID
$\{-1,+1\}$-valued random variables with
$\bP\{S_k = +1\} \le \bP\{S_k = -1\}$;
\item
$(W_1, \ldots, W_n)$ and $(S_1, \ldots, S_n)$
are independent.
\end{itemize}
Define
\[
p_k := \bP
\left \{
Z_k < \bigwedge_{\ell \ne k} Z_\ell
\right \}.
\]
Then, $p_1 \ge p_2 \ge \cdots \ge p_n$.
\end{corollary}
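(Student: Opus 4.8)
The plan is to deduce the corollary directly from Theorem~\ref{T:main} by means of a quantile coupling, which converts the independent but non-identically-distributed variables $W_k$ into an exchangeable driving vector acted on by a family of pointwise-ordered functions. For each $k$ let $F_k(w) := \bP\{W_k \le w\}$ be the distribution function of $W_k$ and let $\eta_k := F_k^{-1}$ denote its generalized (left-continuous) inverse, so that $\eta_k$ is a nondecreasing, hence Borel measurable, map from $(0,1)$ into $\bR_+$. I would then take $(U_1, \ldots, U_n)$ to be IID uniform on $(0,1)$, constructed independent of $(S_1, \ldots, S_n)$, and set $E := (0,1)$ and $Y_k := U_k$. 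Since $\eta_k(U_k)$ has the law of $W_k$ and the $U_k$ are independent, $(\eta_1(U_1), \ldots, \eta_n(U_n)) \stackrel{d}{=} (W_1, \ldots, W_n)$.

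Next I would check that this representation satisfies every hypothesis of Theorem~\ref{T:main}. The vector $(Y_1, \ldots, Y_n)$ is IID, hence exchangeable, and independent of $(S_1, \ldots, S_n)$ by construction, while $(S_1, \ldots, S_n)$ is IID and therefore also exchangeable. The stochastic dominance assumption says $\bP\{W_i > w\} \ge \bP\{W_j > w\}$, i.e. $F_i(w) \le F_j(w)$ for all $w$ and $1 \le i < j \le n$; passing to inverses, this is exactly the statement that $\eta_i(u) \ge \eta_j(u)$ for all $u \in (0,1)$, which is the required pointwise ordering of the $\eta_k$. Finally, writing $q := \bP\{S_k = +1\}$, the sign hypothesis of Theorem~\ref{T:main} reads $q^{n-2}(1-q) \ge q^{n-1}$, and (for $q > 0$, the case $q=0$ being trivial) this reduces to $1-q \ge q$, that is, $\bP\{S_k = +1\} \le \bP\{S_k = -1\}$, precisely the assumption of the corollary.

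With all five hypotheses in force, Theorem~\ref{T:main} gives the desired ordering for the favorite probabilities of the vector $(S_1 \eta_1(Y_1), \ldots, S_n \eta_n(Y_n))$. It then remains only to observe that this vector has the same law as $(Z_1, \ldots, Z_n) = (S_1 W_1, \ldots, S_n W_n)$, which follows from the independence of $(U_1, \ldots, U_n)$ and $(S_1, \ldots, S_n)$ together with the joint identity $(\eta_1(U_1), \ldots, \eta_n(U_n)) \stackrel{d}{=} (W_1, \ldots, W_n)$ recorded above. As each $p_k$ depends only on the law of $(Z_1, \ldots, Z_n)$, the conclusion $p_1 \ge p_2 \ge \cdots \ge p_n$ transfers immediately, with the non-strict inequalities matching those delivered by the theorem. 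I do not anticipate any deep obstacle: the single point demanding care is the direction of the quantile inequality, namely that a pointwise-smaller distribution function corresponds to a pointwise-larger quantile function, so that stochastic dominance of $W_i$ over $W_j$ becomes the inequality $\eta_i \ge \eta_j$ demanded by Theorem~\ref{T:main}.
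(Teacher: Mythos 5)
Your proposal is correct and follows essentially the same route as the paper: represent $W_k = \eta_k(Y_k)$ via the generalized inverse of the distribution function applied to IID uniforms, note that stochastic dominance translates into the pointwise ordering $\eta_i \ge \eta_j$, verify the sign hypothesis via $q^{n-2}(1-q) \ge q^{n-1}$, and invoke Theorem~\ref{T:main}. No gaps.
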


\begin{proof}
It is possible to write $W_k = \eta_k(Y_k)$,
where $Y_1, \ldots, Y_n$ are IID random variables that each have the
uniform distribution on the interval $[0,1]$
and
\[
\eta_k(y)
:= \inf\{w \in \bR_+ : \bP\{W_k \le w\} \ge y\},
\quad y \in [0,1].
\]
It follows from the stochastic ordering assumption on
$W_1, \ldots, W_n$ that
$\eta_i(y) \ge \eta_j(y)$ for $y \in [0,1]$
and $1 \le i < j \le n$.

Also, if we write $p$ for the common value of
$\bP\{S_k = +1\}$, then
\begin{eqnarray}
& &\bP\{S_1 = \cdots = S_{n-2} = +1; \; S_{n-1} = -1\}
=
p^{n-2} (1-p) \nonumber \\
&& \quad \ge
p^{n-1}
=
\bP\{S_1 = \cdots = S_{n-1} = +1\}. 
\end{eqnarray}

The result now follows from Theorem~\ref{T:main}.
\end{proof}

\begin{remark}
\label{R:original_result}
A simple consequence of
Corollary~\ref{C:independent} is that
if $n \ge 3$, $V_1, \ldots, V_n$ are IID random variables
that are symmetrically
distributed (that is, the common distribution of
$V_k$ is the same as that of $-V_k$) and
$c_1 \ge c_2 \ge \cdots \ge c_n > 0$ are nonnegative constants,
then
\begin{equation}
\label{weighted_iid}
\bP
\left \{
c_i V_i < \bigwedge_{k \ne i} c_k V_k
\right \}
\ge
\bP
\left \{
c_j V_j < \bigwedge_{k \ne j} c_k V_k
\right \}
\end{equation}
for $1 \le i < j \le n$.
 
The discussion in Remark~\ref{R:strict_inequality} addresses when inequality in \eqref{weighted_iid} will be strict. Assume that $n \ge 3$ and $c_1 > c_2 > \cdots > c_n > 0$.
Writing $V_k = S_k |V_k|$, $1 \le k \le n$, where $(S_1, \ldots, S_n)$ is IID $\{-1,+1\}$-valued random variables that are independent
of $(|V_1|, \ldots, |V_n|)$ with $\bP\{S_k=\pm 1\} = \frac{1}{2}$,
we have 
$$\bP\{S_1 = -1\} = \frac{1}{2} > \frac{1}{4} = \bP\{S_1 = +1; S_2 = -1\}.$$
Suppose that
the common distribution of $V_k$, $1 \le k \le n$, is diffuse and that $0$ 
is in the support of this distribution.  
Then
\begin{equation}
\label{strict_inequality_condn_weighted_iid}
\bP\left\{\bigvee_{k \notin \{i,j\}} c_k|V_k| \vee c_i|V_j| > c_j|V_i|, \; \bigvee_{k \notin \{i,j\}} c_k|V_k|  \vee c_j|V_j| \le c_i|V_i| \right\} > 0
\end{equation}
for $1 \le i < j \le n$, which is the special case in the present setting of the sufficient condition \eqref{strict_inequality_condn_general} for strict inequality.
To see this, note first that for all $\epsilon > 0$ sufficiently small we have
\[
\begin{split}
& \bP\left\{c_i|V_j| > c_j|V_i|, \; c_j|V_j| \le c_i|V_i|, \; c_i |V_j| > c_j |V_j| > \epsilon \right\}  \\
& = \bP\left\{\frac{c_j}{c_i} < \frac{|V_j|}{|V_i|} \le \frac{c_i}{c_j}, \;  c_i |V_j| >  c_j |V_j| > \epsilon\right\} > 0 \\
\end{split}
\]
whereas
\[
\bP\left\{\bigvee_{k \notin \{i,j\}} c_k|V_k| \le \epsilon\right\} > 0
\]
for all $\epsilon > 0$.
In particular, we recover
Proposition~\ref{P:Gaussian}

It is worth noting that \eqref{weighted_iid} doesn't hold with a strict inequality under just the assumption that $V_1, \ldots, V_n$ are 
IID random variables with a diffuse, symmetric common distribution.
For example, assume that $n=3$ and $c_1 > c_2 > c_3 > 0$ are given.  Suppose that the common distribution of $|V_k|$, $1 \le k \le 3$,
 is supported on an interval $[a,b]$ where the
intervals $c_1[a,b]$, $c_2[a,b]$, $c_3[a,b]$ are pairwise disjoint.  Then
\[
\bP\{c_1 V_1 < c_2 V_2 \wedge c_2 V_3\} = \bP\{V_1 < 0\} = \frac{1}{2},
\]
\[
\bP\{c_2 V_2 < c_1 V_1 \wedge c_3 V_3\} = \bP\{V_1 > 0, \; V_2 < 0\} = \frac{1}{4},
\]
and
\[
\bP\{c_3 V_3 < c_1 V_1 \wedge c_2 V_2\} = \bP\{V_1 > 0, \; V_2 > 0\} = \frac{1}{4}.
\]
\end{remark}

\section{Applications}
\label{S:applications}

\subsection{Randomized experiments}
\label{SS:randomized_experiments}

Suppose we are interested in comparing $n$ treatments.  We will test
each treatment on one of $n$ individuals, which might be people,
families, banks, local or national economies, or plots of land, for
instance.  Treatments are assigned uniformly at random to individuals:
All $n!$ assignments are equally likely.  
The distribution of
the response of individual $j$ to treatment $i$ is a
distribution $P_{ij}$ that is symmetric about zero,
so that no treatment causes any systematic
benefit or harm to any individual.  Suppose for each fixed
$j \in [n]$ and all $y > 0$ that
$P_{ij}\{x \in \bR : |x| > y\}$ is nonincreasing in $i$,
so that the magnitude of the responses of a fixed individual to the
various treatments are stochastically nonincreasing in the treatment
number (i.e., low numbered treatments are more likely to have effects
with a large magnitude than high numbered treatments).
Suppose further that given the assignment of treatments
to individuals the responses of the individuals are conditionally
independent.

We can represent the response to treatment $i$ as
$Z_i = S_i \eta_i(\Pi_i, U_i)$,
where $S_1, \ldots, S_n$ are IID $\{-1,+1\}$-valued random variables
with $\bP\{S_i = -1\} = \bP\{S_i = +1\} = \frac{1}{2}$;
$(\Pi_1, \ldots, \Pi_n)$ is a uniform random permutation of $[n]$;
$U_1, \ldots, U_n$ are IID random variables with a uniform distribution
on the interval $[0,1]$; and $\eta_i(j,\cdot)$ is the inverse of the function
$y \mapsto P_{ij}\{x \in \bR : |x| > y\}$, that is,
\[
\eta_i(j,u)
:=
\sup\{y \ge 0: P_{ij}\{x \in \bR : |x| \le y\} < u\}.
\]
By assumption, $\eta_1(j,u) \ge \cdots \ge \eta_n(j,u)$, and it follows
from Theorem~\ref{T:main} that $p_1 \ge p_2 \ge \cdots \ge p_n$. Hence,
if we think of low values of the response as desirable, then low numbered
treatments are likely to appear to be the most desirable in a
single instance of the experiment, even though they
are also likely to appear to be the least desirable.

In order to give a simple, concrete example of this phenomenon,
consider a situation in which
there are three tasks of comparable difficulty that have to be completed
and three workers available to do them.  In terms of the setting above,
the tasks are the ``individuals'' and the workers are the ``treatments.''

Number the tasks $1$, $2$ and
$3$, and designate the workers by the letters $\cA$, $\cB$ and $\cC$.
The tasks are assigned to the workers at random, with the $3!=6$ possible
allocations being equally likely.
On average, the workers are equally
rapid at completing a given task, but the performance of Worker~$\cA$
is more variable than that of Worker $\cB$,
which is more variable than that of Worker $\cC$.

We model this very simply by assuming that the time
taken to perform Task~$1$ by Worker~$\cA$ (respectively,
Workers~$\cB$ and $\cC$) is either $T-A$ or $T+A$ (respectively,
$T-B$ or $T+B$, and $T-C$ or $T+C$) with equal probability,
where $A, B, C$ are positive constants.
Similarly, the respective times taken by the three workers to
perform Tasks~$2$ and $3$ are $T \pm a$, $T \pm b$, $T \pm c$
and $T \pm \alpha$, $T \pm \beta$, $T \pm \gamma$, with the two alternatives
in each case always being equally likely.
We assume that the
times taken by the workers are conditionally independent given
the random allocation of tasks (that is, all $2^3 = 8$ possible choices
of sign are equally likely for any particular allocation).

\begin{table}[h]
	\centering
\begin{tabular}{ r|c|c|c| }
 \multicolumn{1}{r}{}
  &  \multicolumn{1}{c}{Worker $\cA$}
  & \multicolumn{1}{c}{Worker $\cB$}
  & \multicolumn{1}{c}{Worker $\cC$}
  \\
 \cline{2-4}
 Task $1$ & $T \pm A$ & $T \pm B$ & $T \pm C$ \\
 \cline{2-4}
 Task $2$ & $T \pm a$ & $T \pm b$ & $T \pm c$ \\
 \cline{2-4}
 Task $3$ & $T \pm \alpha$ & $T \pm \beta$ & $T \pm \gamma$ \\
 \cline{2-4}
 \end{tabular}
\caption{Time for each of three workers to complete each of three tasks}
\label{tab:three_task_setup}
\end{table}

The relative variability of the workers' performance is modeled
by taking $A > B > C$, $a > b > c$,
and $\alpha > \beta > \gamma$.
The ordering among these nine quantities is otherwise arbitrary.
We thus have
an instance of the general situation considered above with the inconsequential
difference that the responses are symmetric about $T$ rather than $0$.
We will explore how the probability that a particular worker finishes first
depends on the ordering in detail.

Suppose the ordering is
$A > B > C > a > b > c > \alpha > \beta > \gamma > 0$.
Then worker~$\cA$ finishes first in the following scenarios:
\begin{enumerate}
    \item All signs are negative and $\cA$ is assigned task 1 (2 of 48)
    \item Only the first and second signs are negative and $\cA$ is assigned task~1, or $\cA$ is assigned task~2
             and $\cB$ is assigned task~3 (3 of 48)
    \item Only the first and third signs are negative and $\cA$ is assigned task~1, or $\cA$ is assigned task~2
             and $\cC$ is assigned task~3 (3 of 48)
    \item Only the first sign is negative (6 of 48)
    \item All signs are positive and $\cA$ is assigned task 3 (2 of 48)
\end{enumerate}
These comprise $16/48 = 1/3$ of the equally likely possibilities, so the chance that~$\cA$ finishes first
is $1/3$.
Similarly, worker~$\cB$ finishes first in the following scenarios:
\begin{enumerate}
    \item All signs are negative and $\cB$ is assigned task 1 (2 of 48)
    \item Only the first and second signs are negative and $\cB$ is assigned task~1, or
             $\cB$ is assigned task~2
             and $\cA$ is assigned task~3 (3 of 48)
    \item Only the second and third signs are negative and $\cB$ is assigned task~1, or $\cB$ is assigned task~2
             and $\cC$ is assigned task~3 (3 of 48)
    \item Only the third sign is negative (6 of 48)
    \item All signs are positive and $\cB$ is assigned task 3 (2 of 48)
\end{enumerate}
Again, these comprise $1/3$ of the possibilities, so the chance that~$\cB$ finishes first is $1/3$;
the same is true for~$\cC$.

However, if the ordering is
$A > a > \alpha > B > b > \beta > C > c > \gamma > 0$, then~$\cA$ finishes first if and only if
the first sign is negative, which has chance $1/2$.
For this ordering, $\cB$ finishes first if the first sign is positive and the second is negative,
which has chance $1/4$.
Worker $\cC$ finishes first if the first two signs are positive, which also has chance $1/4$.

It is possible to consider the various other possibilities that are
not the same as one of these two after a relabeling of the tasks; for example,
if $A > a > b > c > B > \alpha > \beta > \gamma > C  > 0$,
then the probability
that Worker $\cA$ finishes first is $\frac{5}{12}$, whereas the probabilities
that Workers $\cB$ and $\cC$ finish first are both $\frac{7}{24}$.
We do not present an exhaustive list of the results.

\subsection{Heteroscedasticity and nonparametric tests of association}
\label{SS:nonparametric_association}

The null hypothesis for standard nonparametric (permutation-based)
tests for association between two series,
such as the Spearman rank correlation test, amounts to the hypothesis
that one series is conditionally exchangeable given the other.
Heteroscedasticity can make that null hypothesis false, even when
there is no positive (resp. negative) association between the series,
where by positive (resp. negative) association we mean that, in some sense,
larger values of one variable tend to occur in conjunction with larger
(resp. smaller) values of the other.
Our results show qualitatively that this can distort the apparent $p$-value
of permutation tests for association.

Consider a decreasing deterministic sequence $x = (x_1, \ldots, x_n)$
and a sequence $Z = (Z_1, \ldots, Z_n)$ whose components are independent
and  symmetrically distributed, but such that $|Z_i|$ stochastically
dominates $|Z_j|$ for $1 \le i < j \le n$.
We haven't given a rigorous definition of association,
but $x$ and $Z$ are not associated in any
intuitively reasonable sense of the term.
However, Corollary~\ref{C:independent} shows that the first component
of $Z$ is most likely to be the largest; when that occurs,
the rank of the largest component of $Z$ is aligned
with the rank of the largest component of $x$.
The full distributional details are complicated, but
one might expect that an extension of this phenomenon
will tend to make the Spearman
rank correlation coefficient $r_S$ take more extreme values
than it would be
if the null hypothesis of exchangeability held.

The following simple example from \cite{walther97,walther99} shows that the
quantitative difference in probabilities can be quite striking.
Let $x = (4, 3, 2, 1)$ and
$$Z  = ( \sigma_1 Y_1, \sigma_2 Y_2, \sigma_3 Y_3, \sigma_4 Y_4),$$
where $\{Y_i\}$ are IID standard Gaussian variables,
$\sigma_1 = 2$, and $\sigma_2 = \sigma_3 = \sigma_4 = 1$.
The chance that $r_S = 1$ is the chance that $Z_1 > Z_2 > Z_3 > Z_4$.
If $\{Z_j\}$ were exchangeable, then that chance
would be $1/24 \approx 4.17\%$.
Simulation shows that in the heteroscedastic (non-exchangeable) model,
$$\bP\{r_S(X, Y)=1\}\approx 7\%,$$ about 68\% higher.
Calibrating the Spearman rank correlation test using the null hypothesis of
exchangeability is misleading, because heteroscedasticity alone
makes the components of $Z$ tend to be closer to ordered than they
would be under random permutations.

We can illustrate the phenomenon even more concretely with the
three workers and three tasks example
from Subsection~\ref{SS:randomized_experiments}.
Note that if $A > a > \alpha > B > b > \beta > C > c > \gamma > 0$, then the
distribution of the order in which the workers $\cA, \cB, \cC$ finish is uniform
over the four possibilities
$(\cA,\cB,\cC), \, (\cA,\cC,\cB), \, (\cB,\cC,\cA), \, (\cC,\cB,\cA)$
and the distribution of the Spearman rank correlation $r_S$
between the vector of finish
times for the three workers and the vector $(1,2,3)$ is
\[
\bP\{r_S = -1\} = \bP\left\{r_S = -\frac{1}{2}\right\}
= \bP\left\{r_S = +\frac{1}{2}\right\} = \bP\{r_S = +1\} = \frac{1}{4},
\]
whereas if the random vector of finish times were exchangeable
(that is, if we were in the usual null situation for the
Spearman rank correlation test), then
the distribution of $r_S$ would be
\[
\begin{split}
\bP\{r_S = -1\} & = \frac{1}{6}, \\
 \bP\left\{r_S = -\frac{1}{2}\right\}
& = \bP\left\{r_S = +\frac{1}{2}\right\} = \frac{1}{3}, \\
 \bP\{r_S = +1\} & = \frac{1}{6}, \\
\end{split}
\]
so performing a Spearman rank correlation test
would be likely to result
in the conclusion that there is a positive (or negative) association between
a worker's label and the worker's finish time.

Our results
do not predict the magnitude of the distortion of the
null distribution of $r_S$, but they do suggest
that there will be such a distortion
quite generally when one sequence is heteroscedastic with
an ordering of the degree of dispersion
that matches the ordering of magnitudes of the other,
even when the components of the first sequence are independent and
have equal means.

\section{Discussion and Conclusions}

We have presented general conditions
on a random vector 
\[
(Z_1, Z_2, \ldots, Z_n)
\]
that guarantee that the probabilities
$p_i := \bP\{Z_i < \bigwedge_{j \ne i} Z_j\}$ satisfy
$p_1 \ge p_2 \ge \cdots \ge p_n$; that is, that the probability
the $i^{\mathrm{th}}$ coordinate is the smallest is decreasing in
$i$.  Analogous results hold for the the probability that
the $i^{\mathrm{th}}$ coordinate is the largest.
The general conclusion is that ``Fortune favors the bold,'' and that
even if $\bP\{Z_i > Z_j\} =  \bP\{Z_i < Z_j\}$
for $1 \le i \ne j \le n$, so that no coordinate is systematically
larger than another, we can still have situations in which such
an ordering will occur because the variability of $Z_i$ decreases with $i$.
Our results give technical precision to the intuition embodied by
the proverb.
We emphasize that our results do not require the
explicit computation of the probability that $Z_i$ is extreme.

Presumably, even more general
conditions that determine the ranks of the probabilities that
each random variable will be extremal could be derived.
Similarly, we have considered inequalities
among the probabilities that different items will be most favored,
but it should also be possible to derive inequalities among the
probabilities that various subsets of the items will have various
subsets of the ranks, not just the chances that each individual item is best.
These remain open problems.

\bigskip
\noindent
{\bf Acknowledgment:} SNE supported in part by NSF grants DMS-09-07630 and DMS-15-12933 and NIH grant 1R01GM109454, 
RLR supported in part by NSF Science \& Technology Center grant CCF-0939370
We thank Alex Rivest for suggesting the procedure given in Appendix C for correcting small-school bias.

\bigskip

\providecommand{\bysame}{\leavevmode\hbox to3em{\hrulefill}\thinspace}
\providecommand{\MR}{\relax\ifhmode\unskip\space\fi MR }
\providecommand{\MRhref}[2]{%
  \href{http://www.ams.org/mathscinet-getitem?mr=#1}{#2}
}
\providecommand{\href}[2]{#2}

\section*{Appendix A. Three independent Gaussians}

Suppose that $X,Y,Z$ are independent zero mean Gaussian random vectors
with variances $\alpha^2 > \beta^2 > \gamma^2 > 0$.  Observe that
$\bP\{X < Y \wedge Z\} = \bP\{(Y-X,Z-X) \in Q\}$, where
$Q$ is the positive quadrant $\{(s,t) \in \bR^2 : s > 0, \, t > 0\}$.
The variance-covariance matrix of the pair $(Y-X,Z-X)$ is
\[
\Sigma
:=
\begin{pmatrix}
\beta^2 + \alpha^2 & \alpha^2 \\
\alpha^2 & \gamma^2 + \alpha^2
\end{pmatrix}.
\]
We can write
\[
(Y-X,Z-X) = (V,W) \Sigma^{\frac{1}{2}},
\]
where $\Sigma^{\frac{1}{2}}$
is the positive definite square root of the matrix $\Sigma$
and $(U,V)$ is a pair of independent standard Gaussian random variables.
The image of the quadrant $Q$ under the linear map defined by
$\Sigma^{-\frac{1}{2}}$ is a wedge with boundary given by the images
of the two positive coordinate axes.  Some algebra shows that
\begin{eqnarray}
&& \frac{
((1,0) \Sigma^{-\frac{1}{2}}) \cdot ((0,1) \Sigma^{-\frac{1}{2}})
}
{
\sqrt{((1,0) \Sigma^{-\frac{1}{2}}) \cdot ((1,0) \Sigma^{-\frac{1}{2}})}
\sqrt{((0,1) \Sigma^{-\frac{1}{2}}) \cdot ((0,1) \Sigma^{-\frac{1}{2}})}
} \nonumber \\
&& \quad =
-\frac{\alpha^2}{\sqrt{(\beta^2 + \alpha^2) (\gamma^2 + \alpha^2)}},
\end{eqnarray}
where we use $a \cdot b$ to denote the usual inner product
of two vectors $a$ and $b$.

It follows from the rotational symmetry of the distribution of $(U,V)$ that
\[
\bP\{X < Y \wedge Z\}
=
\frac{1}{2 \pi}
\arccos
\left(
-\frac{\alpha^2}{\sqrt{(\beta^2 + \alpha^2) (\gamma^2 + \alpha^2)}}
\right).
\]
A similar formula holds for
$\bP\{Y < X \wedge Z\}$ (resp. $\bP\{Z < X \wedge Y\}$)
by interchanging the roles of $\alpha^2$ and $\beta^2$ (resp. $\alpha^2$
and $\gamma^2$).

Some more algebra shows that
\[
\begin{split}
& \frac{\alpha^4}{(\beta^2 + \alpha^2) (\gamma^2 + \alpha^2)}
-
\frac{\beta^4}{(\alpha^2 + \beta^2) (\gamma^2 + \beta^2)} \\
& \quad =
\frac{(\alpha^2 - \beta^2) (\alpha^2 \beta^2 + \beta^2 \gamma^2 +  \alpha^2 \gamma^2)}
{(\alpha^2 + \beta^2) (\alpha^2 + \gamma^2) (\beta^2 + \gamma^2)}
> 0, \\
\end{split}
\]
and so
\[
\bP\{X < Y \wedge Z\} > \bP\{Y < X \wedge Z\}.
\]
Similarly,
\[
\bP\{Y < X \wedge Z\} > \bP\{Z < X \wedge Y\}.
\]

\section*{Appendix B. The minimum of three bilateral exponentials}

Given a dispersion parameter $\theta > 0$, write
$f_\theta(x) := \frac{1}{2 \theta} e^{-\frac{|x|}{\theta}}$ for the
density of the corresponding bilateral exponential distribution.  Note that
\[
\int_x^\infty f_\theta(t) \, dt
=
\begin{cases}
\frac{1}{2}(1 - e^{-\frac{|x|}{\theta}}) + \frac{1}{2},& \quad x < 0,\\
\frac{1}{2} e^{-\frac{|x|}{\theta}}, & \quad x \ge 0.\\
\end{cases}
\]
Suppose that $X,Y,Z$ are independent real-valued random variables
with respective bilateral exponential 
densities $f_a, f_b, f_c$, where the parameters satisfy
$a > b > c > 0$, so that $X$ is more dispersed than $Y$, which is
more dispersed than $Z$.

An explicit integration shows that
\[
\begin{split}
& \bP\{X < Y \wedge Z\} \\
& \quad = 
\int_{-\infty}^\infty
\bP\{Y > x\} \bP\{Z > x\}
\, \bP\{X \in dx\} \\
& \quad =
\frac{
2 a^3 b + a^2 b^2 + 2 a^3 c + 5 a^2 b c + 2 a b^2 c + a^2 c^2 +
 2 a b c^2 + b^2 c^2
}
{
4 (a + b) (a + c) (a b + b c + a c)
}. \\
\end{split}
\]
A similar expression for $\bP\{Y < X \wedge Z\}$
(resp. $\bP\{Z < X \wedge Y\}$) follows by interchanging
the roles of $a$ and $b$ (resp. $a$ and $c$).

It follows that
\[
\begin{split}
& \bP\{X < Y \wedge Z\} - \bP\{Y < X \wedge Z\} \\
& \quad =
\frac{
(a - b) (b^2 c^2 + a^2 (b + c)^2 + a b c (2 b + 3 c))
}
{
4 (a + b) (a + c) (b + c) (a b + b c + a c)
}
> 0 \\
\end{split}
\]
and
\[
\begin{split}
& \bP\{Y < X \wedge Z\} - \bP\{Z < X \wedge Y\} \\
& \quad =
\frac{
(b - c) (b^2 c^2 + 2 a b c (b + c) +
   a^2 (b^2 + 3 b c + c^2))
}
{
4 (a + b) (a + c) (b + c) (a b + b c + a c)
}
> 0, \\
\end{split}
\]
so
\[
\bP\{X < Y \wedge Z\}
>
\bP\{Y < X \wedge Z\}
>
\bP\{Z < X \wedge Y\}.
\]

\section*{Appendix C. Avoiding small-school bias}

We consider how one might correct for small-school bias in a model problem
involving standardized testing.

There are $n$ schools of different sizes.
The schools draw their students at random, independently, from
the same infinite population.
At the beginning of the school year, the scores students would get on the standardized
test are modeled as IID.
Attending school $i$ for the year increases the expected value of a student's
test score by $s_i$, $i=1, \ldots, n$.
Let $S_{ij}$ be the score of the $j$th student at school $i$ at the end of the year.
In this model, $\{ S_{ij}-s_i \}$ are IID.

We wish to award a ``best school'' prize to exactly one school,
based on student scores on the standardized test.
We want the scheme to be \emph{fair}, in that if 
$s_1 = s_2 = \cdots = s_n$, then all schools are equally likely to win.

We want the scheme to be \emph{valid} in the sense that if 
if $s_i > s_j$, then school $i$ is more likely to be picked as ``best 
school'' than $s_j$.

The proposed solution (suggested to us by Alex Rivest) is both 
fair and valid.

Let $m$ be the smallest school size.
The summary score for school $i$ is the average test score of
a random sample of $m$ students at school $i$.
The prize is awarded to the school with the highest summary score.

The method is fair, since the summary score for each school is determined
by a random size-$m$ set of students: If $\{s_i\}$ are equal,
the summary scores of the $n$ schools are IID, and every school is equally
likely to rank first. 
The method is valid, since the score of school $i$ is stochastically larger than
the score for school $j$ if $s_i > s_j$.

While this method is fair and valid, it relies on a subsample, so it 
might not maximize the probability that the prize is awarded to the
school with the largest $s_i$ among all far and valid methods.
Finding a better method that is both fair and valid is an open problem.

\end{document}